\documentclass[parskip=half,bibliography=totoc]{scrartcl}




\usepackage{comment}
\usepackage[automark]{scrlayer-scrpage}								
\usepackage{amsfonts}												
\usepackage{amsmath}												
\usepackage{mathtools}												
\usepackage{stmaryrd}												
\usepackage{amssymb}												
\usepackage{mathrsfs}												
\usepackage{accents}												
\usepackage[T1]{fontenc}											
\usepackage[utf8]{inputenc}											
\usepackage[top=1in,bottom=1in,left=1.25in,right=1.25in]{geometry}	
\usepackage{enumerate} 												
\usepackage{authblk}												
\usepackage{abstract}												
\usepackage{etoolbox}												
\usepackage[normalem]{ulem}

\usepackage{tikz-cd}												


\usepackage{amsthm}													


\usepackage[colorlinks=true,citecolor=blue,allcolors=blue]{hyperref}				
\usepackage[noabbrev]{cleveref}										






\setlength{\parindent}{0pt}


\newtheoremstyle{mythm}
{}
{}
{\slshape}
{}
{\bfseries\sffamily}
{.}
{ }
{}
\newtheoremstyle{mydef}
{}
{}
{}
{}
{\bfseries\sffamily}
{.}
{ }
{}

\theoremstyle{mythm}
\newtheorem{thm}{Theorem}[section]

\newtheorem{lem}[thm]{Lemma}
\theoremstyle{mydef}

\allowdisplaybreaks


\clearpairofpagestyles
\pagestyle{scrheadings}
\ihead[]{\headmark}
\ohead[]{\pagemark}
\cfoot[\pagemark]{}

\deffootnote[1em]{0em}{1em}{%
	\textsuperscript{\thefootnotemark}%
}
\setfootnoterule{3em}

\apptocmd{\sloppy}{\hbadness 10000\relax}{}{}



\title{A note on quaternionic K\"ahler manifolds with ends of finite volume}
\author{V.\ Cort\'es}

\affil{\normalsize $^1$Department of Mathematics\hfill \protect\\  University of Hamburg\hfill \protect\\  
Bundesstra\ss e 55, D-20146 Hamburg, Germany\hfill \protect\\  
\texttt{vicente.cortes@uni-hamburg.de}
}

\date{\today}

\begin{document}
\maketitle

\begin{abstract}
We prove that complete non-locally symmetric quaternionic K\"ahler manifolds with an end of finite volume
exist in all dimensions $4m\ge 4$.

	\par
	\emph{Keywords: quaternionic K\"ahler manifolds, ends of finite volume}\par
	\emph{MSC classification: 53C26}
\end{abstract}

\clearpage
\setcounter{tocdepth}{2}

\section{Introduction}
Quaternionic K\"ahler manifolds constitute one of the most interesting classes of Einstein
manifolds in Riemannian geometry \cite{Bes}. They occur naturally in the context of the classification of 
Riemannian holonomy groups \cite{B}. One of the long-standing conjectures 
in differential geometry is that complete quaternionic K\"ahler manifolds of \emph{positive} scalar curvature 
are symmetric \cite{LS}. At the time of writing, it has been proven in dimensions $4$ \cite{H,FK}, $8$ \cite{PS}, $12$ and $16$ \cite{BWW}. The first examples of complete quaternionic K\"ahler manifolds of \emph{negative} scalar curvature which are not locally symmetric were found in \cite{A2}.  However, until today, no such examples of \emph{finite volume} are known.  

It was shown in \cite{CRT} that complete non-locally symmetric quaternionic K\"ahler manifolds 
of negative scalar curvature with an end of finite volume
exist in dimensions $4$ and~$8$. 

The construction was based on the existence in all dimensions $4m\ge 4$ of complete non-locally symmetric quaternionic K\"ahler manifolds $(M,g)$ of negative scalar curvature with a cohomogeneity one action by a Lie group $G$ of isometries which admits a lattice $\Gamma \subset G$ acting freely an properly discontinuously on $M$.  In fact, the quotients $X=M/\Gamma$, with the induced metric, were shown to have the topological structure of a cylinder $X = \mathbb{R}\times G/\Gamma$ fibering over the line with fibers of finite volume. Moreover, the volume of the half-cylinder $\{ (t,x) \in X\mid t>0 \}\subset X$, was shown to be finite. 
In particular, if $\Gamma \subset G$ is cocompact, then 
$X$ has two ends and one of them is of finite volume. However, cocompact lattices in $G$ were only shown to exist in dimensions $\le 8$. This left the following problem open.

\textbf{Problem.} Do there exist complete non-locally symmetric quaternionic K\"ahler manifolds of negative scalar curvature with an end of finite volume in all dimensions $4m \ge 4$?

In this note we give a positive answer to this problem. 

\begin{thm}\label{main:thm}For all $4m \ge 4$ there exists a complete non-locally symmetric quaternionic K\"ahler manifold $(M,g)$ of negative scalar curvature with two ends such that one end is of finite volume and the other of infinite volume. 
\end{thm}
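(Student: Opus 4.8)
The plan is to remove the cocompactness obstruction in \cite{CRT}. The dimensions $4m=4$ and $4m=8$ are already settled there — a cocompact lattice $\Gamma\subset G$ exists, the cylinder $X=M/\Gamma=\R\times(G/\Gamma)$ has two ends, and the volume computation below shows that one of them, the end $\{t>0\}$, has finite and the other infinite volume — so it remains to handle $4m\ge 12$. For those dimensions the idea is to run the same argument with the cohomogeneity one action chosen so that the acting group is \emph{nilpotent}: in a nilpotent Lie group every lattice is automatically cocompact, which is precisely what fails for the (necessarily non-nilpotent) groups used in \cite{CRT} in high dimensions.

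Concretely, for $4m\ge 12$ I would take $(M,g)$ to be a non-symmetric Alekseevsky space: a solvable Lie group $\hat S=\exp(\R\xi)\ltimes N$ carrying a left-invariant quaternionic K\"ahler metric of negative scalar curvature, where $N$ is the nilradical, $\dim_{\R} N=4m-1$, and $\operatorname{ad}_\xi$ acts on $\mathfrak n$ with eigenvalues of positive real part; such spaces exist in every dimension $4m\ge 12$ and are complete, homogeneous, and not symmetric. The codimension one \emph{nilpotent} subgroup $N\subset\hat S$ acts on $M=\hat S$ by left translations with cohomogeneity one — its orbits are the horospheres centred at the point at infinity determined by $+\xi$ — and in the resulting identification $M\cong\R\times N$ the metric is a warped product $g=\d t^2+g_t$ with each $g_t$ a left-$N$-invariant metric satisfying $\operatorname{vol}_{g_t}=e^{-t\,\tr(\operatorname{ad}_\xi|_{\mathfrak n})}\operatorname{vol}_{g_0}$ as measures on $N$; thus $g_t$ contracts exponentially as $t\to+\infty$ and expands as $t\to-\infty$, exactly as in a cusp of a finite-volume negatively curved manifold.

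The step I expect to be the real obstacle is to produce a lattice $\Gamma\subset N$. Since $N$ is nilpotent, by Malcev's criterion this amounts to exhibiting a basis of $\mathfrak n$ with rational structure constants, which has to be extracted from the explicit description of the Alekseevsky algebras — possibly after an overall rescaling of $\mathfrak n$, which changes neither the nilpotency nor the existence of the quaternionic K\"ahler metric. Granted such a $\Gamma$, it acts on $M$ freely, properly discontinuously and by isometries preserving the quaternionic structure, so $X:=\Gamma\backslash M$ is a complete quaternionic K\"ahler manifold of negative scalar curvature; it is not locally symmetric, since otherwise its universal cover $\hat S$ would be globally symmetric, contrary to the choice of $\hat S$. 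As $X\cong\R\times(\Gamma\backslash N)$ with $\Gamma\backslash N$ a compact nilmanifold, $X$ has exactly two ends, and $\operatorname{vol}(\{t>0\})=\operatorname{vol}_{g_0}(\Gamma\backslash N)\int_0^\infty e^{-t\,\tr(\operatorname{ad}_\xi|_{\mathfrak n})}\,\d t<\infty$ while the same integral over $(-\infty,0)$ diverges; hence one end of $X$ is of finite and the other of infinite volume. Taking $(M,g):=(X,g)$, together with the cases $4m\in\{4,8\}$ from \cite{CRT}, would prove the theorem.
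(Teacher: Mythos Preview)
Your proposal does not constitute a proof: it has precisely the gap you yourself flag, namely the existence of a lattice $\Gamma\subset N$. Malcev's criterion is an intrinsic condition on the isomorphism class of $\mathfrak n$ --- an ``overall rescaling of $\mathfrak n$'' is just a change of basis and cannot turn a Lie algebra without a rational form into one that has one --- so for each $4m\ge 12$ you would have to exhibit a specific non-symmetric Alekseevsky algebra whose nilradical admits a rational basis, and you have not done so. You have traded the cocompactness problem in \cite{CRT} for a lattice-existence problem in a nilpotent group and left the latter open; without it $\Gamma\backslash N$ is not known to exist, and the argument produces no manifold with two ends.

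The paper sidesteps lattices in Lie groups altogether and takes a quite different route. For $m\ge 2$ it uses Fowdar's quaternionic K\"ahler metric $g=dt^2+4\rho^2\sum\alpha_i^2+\rho\,g_N$, $\rho=e^{2t}$, on $M=\R\times P$, where $P\to N$ is a $T^3$-principal bundle whose curvature is the hyper-K\"ahler triple of a \emph{compact} hyper-K\"ahler base $(N,g_N)$ with integral K\"ahler classes. Taking $N=S^{\times(m-1)}$ for a K3 surface $S$ carrying an integral hyper-K\"ahler triple (such $S$ exist by the lattice theory of polarized K3 surfaces) gives $\dim M=4m$; compactness of $P$ is automatic, so the $t$-integral of the volume form immediately yields one finite-volume and one infinite-volume end. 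Non-local-symmetry is proved topologically rather than by invoking the Alekseevsky classification: were $(M,g)$ locally symmetric its universal cover would be contractible, forcing $\pi_2(M)=0$, and the homotopy sequence of the fibration $M\to N$ would then inject $\pi_2(N)\cong\Z^{22(m-1)}$ into $\pi_1(\R\times T^3)=\Z^3$, which is impossible. Thus the paper replaces your unresolved arithmetic question about $\mathfrak n$ by a known existence statement for K3 surfaces.
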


{\bfseries Acknowledgements}

This work was supported by the German Science Foundation (DFG) under Germany's Excellence Strategy  --  EXC 2121 ``Quantum Universe'' -- 390833306. I am grateful to Daniel Huybrechts for pointing out Lemma~\ref{K3:lem} and its proof. 
I thank Danu Thung and Iv\'an Tulli for helpful comments. 
\section{Proof of Theorem \ref{main:thm}}
\label{2ndSec}
The following lemma is a slight refinement of the statements about the metric (1.8) in \cite[Theorem 1.1]{F}.
For convenience we have redefined the connection forms. For expository reasons we include a proof. 
\begin{lem}\label{metric:lem}Let $(N, g_N)$ be a hyper-K\"ahler manifold with integral K\"ahler forms $\sigma_1$, $\sigma_2$, $\sigma_3$,  
that is the corresponding de Rham classes $[\sigma_i]$ belong to the image of the natural map
$H^2(M,\mathbb{Z})\rightarrow H^2(M,\mathbb{R})$. Let $P\rightarrow N$ be a $T^3$-principal bundle with connection 
$(\alpha_1,\alpha_2,\alpha_3)\in \Omega^1(P,\mathbb{R}^3)$ and curvature forms 
$d\alpha_1 = \sigma_1$, $d\alpha_2 = \sigma_2$, $d\alpha_3 = \sigma_3$, where $T^3 = \mathbb{R}^3/\mathbb{Z}^3$. 
Then 
\begin{itemize}
\item[(i)]
\[ g := dt^2 + 4\rho^2 \sum \alpha_i^2 + \rho g_N,\quad \rho = e^{2t},\] 
is a quaternionic K\"ahler metric on $M := \mathbb{R}\times P$. 
\item[(ii)] The scalar curvature of $(M,g)$ is negative.
\item[(iii)] $(M,g)$ is complete if and only if $(N,g_N)$ is. 
\end{itemize}
\end{lem}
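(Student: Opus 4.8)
The plan is to prove (i) by writing down an explicit almost quaternionic Hermitian structure on $M$ and checking that the Levi-Civita connection preserves it; to get (ii) from a Riccati computation for the level hypersurfaces together with (i); and to settle (iii) by a soft argument with curves of finite length.

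\emph{Part (i).} Let $I_1,I_2,I_3$ be the hyper-K\"ahler complex structures of $(N,g_N)$, so $\sigma_i=g_N(I_i\cdot,\cdot)$, and pull them and the $\sigma_i$ back to $P$ and to $M$. Let $X_1,X_2,X_3$ be the fundamental vector fields of the $T^3$-action, normalised by $\alpha_i(X_j)=\delta_{ij}$, and let $\partial_t$ be the coordinate field of the $\R$-factor. Define three local skew-symmetric almost complex structures $J_1,J_2,J_3$ on $(M,g)$ which act as the standard quaternionic triple on $\Span(\partial_t,X_1,X_2,X_3)$ — for instance $J_1\partial_t=\tfrac1{2\rho}X_1$, $J_1X_2=X_3$ and cyclically, the signs and the sign of $I_i$ on the complement being pinned down by the relations $d\alpha_i=\sigma_i$ so that $J_1J_2=J_3$ — and as $I_1,I_2,I_3$ on the horizontal lift of $TN$. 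These span a rank-$3$ subbundle $\mathcal{Q}\subset\End(TM)$ compatible with $g$. In the $g$-orthonormal coframe $e^0=dt$, $e^i=2\rho\,\alpha_i$, $e^a=\sqrt\rho\,\phi^a$, where $\phi^a$ is an orthonormal coframe of $g_N$, the identities $d\rho=2\rho\,dt$, $d\alpha_i=\sigma_i$ and $d\phi^a=-\omega^a{}_b\wedge\phi^b$ give the structure equations
\[ de^0=0,\qquad de^i=2\,e^0\wedge e^i+\hat\sigma_i,\qquad de^a=e^0\wedge e^a-\omega^a{}_b\wedge e^b, \]
where $\hat\sigma_i$ is $\sigma_i$ rewritten in the $e^a$ (so it is $t$-independent) and $\omega^a{}_b=-\omega^b{}_a$ is the Levi-Civita connection of $g_N$, both pulled back. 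Solving these for the Levi-Civita connection $1$-forms of $g$ — the cross terms between the torus directions $e^i$ and the base directions $e^a$ being governed by the $\hat\sigma_i$ — one verifies that $\nabla_XJ_i=\sum_j c_{ij}(X)\,J_j$ for suitable $1$-forms $c_{ij}=-c_{ji}$, i.e.\ $\nabla\mathcal{Q}\subseteq\mathcal{Q}$ (equivalently $\nabla\Omega=0$ for the fundamental $4$-form $\Omega=\sum_i\omega_i\wedge\omega_i$, $\omega_i=g(J_i\cdot,\cdot)$). For $4m\ge 8$ this says the holonomy of $g$ lies in $\mathrm{Sp}(m)\mathrm{Sp}(1)$ with $m\ge 2$, so $(M,g)$ is quaternionic K\"ahler. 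For $4m=4$ one has $N=\{\mathrm{pt}\}$, $P=T^3$ and $g=dt^2+4e^{4t}\sum\alpha_i^2$, which after $s=2t$ and a constant rescaling of the fibre coordinates is $\tfrac14$ times the hyperbolic metric; it is therefore locally isometric, up to scale, to real hyperbolic $4$-space, in particular self-dual Einstein, i.e.\ quaternionic K\"ahler in dimension four. Thus (i) is \cite[Theorem~1.1]{F} after matching conventions, and I would simply redo the above computation with the present normalisation of the $\alpha_i$.

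\emph{Part (ii).} Write $g=dt^2+h_t$ with $h_t=4e^{4t}\sum\alpha_i^2+e^{2t}\pi^*g_N$ on the level hypersurface $P\cong\{t\}\times P$; then $\partial_t$ is a unit geodesic field and the shape operator $S=\tfrac12 h_t^{-1}\partial_t h_t$ equals $2$ on the torus directions and $1$ on the horizontal directions. Hence $\tr S=4m+2$ is constant and $\tr(S^2)=4m+8$, so the Riccati equation gives $\ric(\partial_t,\partial_t)=-\partial_t(\tr S)-\tr(S^2)=-(4m+8)<0$. By (i), $(M,g)$ is Einstein, so $\scal=4m\,\ric(\partial_t,\partial_t)=-16m(m+2)<0$.

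\emph{Part (iii).} Suppose $(N,g_N)$ is complete and let $\gamma\colon[0,\ell)\to M$ have finite $g$-length. Since $g\ge dt^2$, the function $t\circ\gamma$ has finite total variation, so it converges and its image lies in a compact interval $[a,b]$; on $t^{-1}([a,b])$ we have $\rho\ge e^{2a}>0$, hence $g\ge\varepsilon\,g'$ there for some $\varepsilon>0$, where $g'=dt^2+4\sum\alpha_i^2+\pi^*g_N$ is a connection-type metric on $M=\R\times P$. The metric $g'$ is complete: a finite-length curve in $(P,\,4\sum\alpha_i^2+\pi^*g_N)$ projects to a finite-length, hence relatively compact (Hopf--Rinow), curve in $N$, so it stays in the preimage of a compact set, which is compact since the fibre $T^3$ is compact, and therefore converges. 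Consequently $\gamma$ has finite $g'$-length, is $g'$-Cauchy, converges in $(M,g')$, and hence converges in $(M,g)$ by local equivalence of $g$ and $g'$; so $(M,g)$ is complete. Conversely, if $(M,g)$ is complete and $c\colon[0,\ell)\to N$ has finite $g_N$-length $L$, fix $t_0$ and let $\tilde c$ be the horizontal lift of $c$ to $P$; then $s\mapsto(t_0,\tilde c(s))$ has $g$-length $e^{t_0}L<\infty$, so it converges in $M$, and projecting to $N$ shows that $c$ converges, so $(N,g_N)$ is complete. The main obstacle is Part (i): solving the structure equations for the Levi-Civita connection of $g$ and confirming $\nabla\mathcal{Q}\subseteq\mathcal{Q}$ is a genuine, if routine, computation, and it is the only place where the precise normalisation of the $\alpha_i$ and the hyper-K\"ahler condition on $N$ really enter.
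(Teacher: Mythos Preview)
Your argument is correct in all three parts, but for (i) and (ii) it differs from the paper's route.

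For (i), the paper does not compute the Levi-Civita connection at all. Instead it writes down the fundamental two-forms
\[
\omega_i = 2\rho\, dt\wedge \alpha_i + 4\rho^2\, \alpha_j\wedge \alpha_k + \rho\, \sigma_i
\]
and invokes Swann's criterion \cite{S}: it is enough that the exterior ideal generated by $\omega_1,\omega_2,\omega_3$ be a differential ideal, and this follows from a short computation of $d\omega_i$. Your plan to solve the structure equations for the full connection and verify $\nabla\mathcal{Q}\subset\mathcal{Q}$ directly is a heavier but perfectly valid alternative; you correctly identify it as the main computational load, and your treatment of the $4m=4$ case (identifying the metric with a rescaled $\H^4$) is right.

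For (ii), the paper avoids any curvature computation on $M$: it observes that the fibres of $M\to N$ are totally geodesic quaternionic submanifolds of constant negative curvature and appeals to Alekseevsky's result \cite{A1} that a quaternionic submanifold inherits the ambient reduced scalar curvature. Your Riccati computation is more elementary, uses only (i) via the Einstein condition, and has the bonus of giving the exact value $\scal=-16m(m+2)$.

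For (iii), your argument is essentially the paper's, carried out by hand: the paper bounds $g$ from below on compact $t$-intervals by a fixed complete connection-type metric and cites \cite[Lemma~2]{CHM}, while you reprove that lemma via Cauchy sequences; the converse in the paper passes through the closed embedded slice $\{t_0\}\times P$, whereas you lift curves horizontally, which amounts to the same thing.
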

\begin{proof}
(i) 
Consider the two-forms 
\begin{equation} \label{Kf:eq}\omega_i := 2\rho dt\wedge \alpha_i + 4\rho^2 \alpha_j\wedge\alpha_k +  \rho \sigma_i, \quad i=1,2,3, \end{equation}
where $(i,j,k)$ is a cyclic permutation of $\{ 1,2,3\}$. 
Together with the metric $g$ they define an almost hyper-Hermitian structure $(g, J_1,J_2,J_3)$ on $M$ 
with the fundamental forms 
$\omega_i = g \circ J_i = g(J_i\cdot, \cdot )$, $i=1,2,3$. 
To prove that the metric is quaternionic K\"ahler with the 
quaternionic structure $Q= \mathrm{span}\{ J_1, J_2,J_3\}$ it suffices 
to check that the ideal of the exterior algebra generated by the forms $\omega_1, \omega_2, \omega_3$ is 
a differential ideal \cite{S}. This follows immediately by calculating the differentials of (\ref{Kf:eq}), as done in \cite{F}. 

(ii) To see that the scalar curvature is negative it suffices to remark that the fibers of the projection $M \rightarrow N$
are quaternionic submanifolds and of constant negative sectional curvature. Since a quaternionic submanifold has the 
same reduced scalar curvature as the ambient quaternionic K\"ahler manifold \cite{A1}, the claim follows. 

(iii) The completeness of $(N,g_N)$ implies that of $(M,g)$ by observing that 
$g$ is of the form $g=dt^2 + g_t$, where $g_t$ is a family of metrics on $P$ which over compact subsets $K\subset \mathbb{R}$ is uniformly bounded from below by a complete metric $g_K$ on $P$ and applying \cite[Lemma 2]{CHM}. 
In fact, we can simply take the (product) metric $g_K=4\rho_0^2 \sum \alpha_i^2 + \rho_0 g_N$, where 
$\rho_0 = \min_K \rho$. 

For the converse, we note first that any embedded submanifold of a complete 
Riemannian manifold is complete. Hence,  the completeness of $(M,g)$ implies that of $(P,g_t)$. 
Since, for fixed $t$, $g_t$ is simply a product metric, the completeness of the factor $(N,\rho (t) g_N)$ and hence of 
$(N,g_N)$ follows. 
\end{proof}
\begin{lem}\label{K3:lem}
There exists a K3 surface $S$ which admits a hyper-K\"ahler structure\linebreak  
$(g_S,J_1,J_2,J_3)$ with integral K\"ahler forms. 
\end{lem}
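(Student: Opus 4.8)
The plan is to deduce the lemma from the surjectivity of the period map for hyper-K\"ahler metrics on the K3 manifold, so that everything reduces to a purely lattice-theoretic statement. Recall that for any K3 surface $S$ the intersection form makes $H^2(S,\Z)$ isometric to the even unimodular lattice $\Lambda = U^{\oplus 3}\oplus E_8(-1)^{\oplus 2}$ of signature $(3,19)$, and that a hyper-K\"ahler structure $(g_S,J_1,J_2,J_3)$ with K\"ahler forms $\sigma_i = g_S(J_i\cdot,\cdot)$ determines the oriented, positive-definite $3$-plane $W := \Span_\R\{[\sigma_1],[\sigma_2],[\sigma_3]\}\subset H^2(S,\R)$, on which the intersection form is positive definite with $[\sigma_i]\cdot[\sigma_j] = 2\,\mathrm{Vol}(S,g_S)\,\delta_{ij}$. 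Conversely --- and this is the key input, being the combination of Yau's theorem with the global Torelli theorem (see e.g.\ \cite{Bes}) --- every oriented positive-definite $3$-plane in $\Lambda\otimes\R$ is realised by the K\"ahler forms of such a structure, with the metric essentially determined by $W$ up to scale. Using in addition the residual hyper-K\"ahler rotation freedom within $W$ and rescaling the metric, one gets: for any oriented basis $v_1,v_2,v_3$ of a positive-definite $3$-plane $W$ that is orthogonal with $v_1^2 = v_2^2 = v_3^2 > 0$, there is a hyper-K\"ahler structure on the K3 manifold whose K\"ahler forms satisfy $[\sigma_i] = v_i$.

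It therefore suffices to exhibit three integral classes in $\Lambda$ that are pairwise orthogonal and of equal positive norm: their $\R$-span is then the required positive-definite $3$-plane, and fixing its orientation suitably produces, by the previous paragraph, a hyper-K\"ahler K3 surface whose K\"ahler forms represent exactly these integral classes. Such classes are immediate inside the summand $U^{\oplus 3}$: writing $e_i,f_i$ for the standard hyperbolic basis of the $i$-th copy of $U$, so that $e_i^2 = f_i^2 = 0$ and $e_i\cdot f_i = 1$, the vectors $v_i := e_i + f_i$, $i=1,2,3$, are pairwise orthogonal, satisfy $v_i^2 = 2$, and span a positive-definite (in fact primitive) rank-$3$ sublattice of $\Lambda$. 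This completes the plan.

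The one step that genuinely needs care is the invocation of the period map: one must use the version for hyper-K\"ahler, equivalently Ricci-flat K\"ahler, metrics on the K3 manifold, whose image is the \emph{entire} Grassmannian of oriented positive $3$-planes, with no condition excluding planes orthogonal to $(-2)$-classes --- such classes only govern which members of the $S^2$-family of compatible complex structures are projective, not the existence of the metric. A more hands-on alternative would be to start from a singular K3 surface (Picard number $20$) whose rank-$2$ transcendental lattice contains two orthogonal vectors of equal norm, normalise the holomorphic $2$-form so that its real and imaginary parts are integral, and pair these with an integral Ricci-flat K\"ahler class in $\mathrm{NS}(S)$ of matching square; but arranging all three normalisations to be simultaneously compatible makes this route no shorter than the argument above.
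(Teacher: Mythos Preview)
Your strategy via the period map for hyper-K\"ahler metrics is sound in outline, but the execution contains a genuine error precisely at the point you flag as ``the one step that genuinely needs care.'' The surjectivity statement you invoke is false as stated: the period map from \emph{smooth} hyper-K\"ahler metrics on the K3 manifold to oriented positive $3$-planes in $\Lambda_\R$ has image equal to the open set of $3$-planes $W$ with $W^\perp$ containing no $(-2)$-vector. When $W^\perp$ does contain a $(-2)$-class $\delta$, that class is of type $(1,1)$ and orthogonal to the K\"ahler class for \emph{every} complex structure in the twistor $S^2$; Riemann--Roch then forces $\pm\delta$ to be effective, contradicting $\delta\cdot[\sigma_1]=0$. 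Geometrically, such $3$-planes correspond to K3 orbifolds where the $(-2)$-spheres have collapsed, not to smooth metrics.

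Your concrete choice $v_i=e_i+f_i$ fails this test badly: $W^\perp$ contains both the vectors $e_i-f_i$ (each of square $-2$) and the entire $E_8(-1)^{\oplus 2}$ with its $240+240$ roots of square $-2$. Any $3$-plane sitting inside $U^{\oplus 3}\otimes\R$ will have $E_8(-1)^{\oplus 2}\subset W^\perp$, so no tweak of the $v_i$ within the hyperbolic summands can rescue the argument. One really has to produce a $W$ whose orthogonal complement is a negative-definite rank-$19$ lattice with \emph{no} vector of square $-2$, and writing one down explicitly inside $\Lambda$ is not immediate.

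The paper sidesteps this by taking your ``hands-on alternative'' seriously: it quotes \cite{D} for the existence of a projective K3 surface of degree $10$ with transcendental lattice $\mathrm{diag}(10,10)$, and lets $W$ be spanned by an orthogonal basis of $T(S)$ together with the polarization $H$. Here $W^\perp=H^\perp\cap\mathrm{NS}(S)$, and any $(-2)$-class in $\mathrm{NS}(S)$ is (up to sign) effective, hence pairs nontrivially with the ample class $H$; so $W^\perp$ is automatically root-free. Thus the ampleness of $H$ is exactly what handles the $(-2)$-condition you dismissed.
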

\begin{proof}
This follows from \cite[Table 1]{D}. In fact, from the table we see that there exists a  K3 surface of degree 10 with transcendental lattice of rank two and intersection form represented by $10 \cdot \mathrm{id}$ with respect to
a basis of the lattice. Complementing such a basis by the Fubiny study class of the projective embedding,  
we obtain three integral K\"ahler classes. Choosing a representative in each of the three  classes we obtain 
a hyper-K\"ahler triple on $S$ defining the desired hyper-K\"ahler structure. 
\end{proof}
\begin{lem} \label{vol:lem}For every compact hyper-K\"ahler manifold $(N,g_N)$ with integral K\"ahler forms 
the quaternionic K\"ahler manifold $(M,g)$ of Lemma~\ref{metric:lem} has two ends, one of finite volume and the other of infinite volume.  
\end{lem}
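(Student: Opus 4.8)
The plan is to read off both assertions directly from the warped-product shape of $g$ together with the compactness of the total space $P$.

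First I would record the topology. Since $N$ is compact (and connected, as is implicit in the notion of a hyper-K\"ahler manifold) and $P\to N$ is a fiber bundle with compact connected fiber $T^3$, the total space $P$ is a compact connected manifold, and $M=\R\times P$ is a cylinder over it. Exhausting $M$ by the compact sets $K_T:=[-T,T]\times P$, the complement $M\setminus K_T$ has precisely the two connected, non-relatively-compact components $\mc U_T^-:=\{t<-T\}\times P$ and $\mc U_T^+:=\{t>T\}\times P$, and these are nested as $T\to\infty$. Hence $M$ has exactly two ends, say $\fre_-$ (represented by the $\mc U_T^-$) and $\fre_+$ (represented by the $\mc U_T^+$).

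Next I would compute the Riemannian volume form of $g$. For each fixed $t$ the metric $g_t = 4\rho^2\sum\alpha_i^2 + \rho\,\pi^* g_N$ on $P$ is adapted to the principal fibration $\pi\colon P\to N$: the vertical distribution tangent to the $T^3$-fibers is the kernel of $d\pi$ and carries $4\rho^2\sum\alpha_i^2$, the horizontal distribution $\bigcap_i\ker\alpha_i$ carries $\rho\,\pi^* g_N$, and the two are $g_t$-orthogonal because $\pi^*g_N$ annihilates vertical vectors while the $\alpha_i$ annihilate horizontal ones; moreover $\p_t$ is $g$-orthogonal to both. Since $\dim N = 4m-4$ and $\alpha_1\wedge\alpha_2\wedge\alpha_3$ restricts on every $T^3$-fiber to a volume form of total mass $1$, the $g$-volume form factorizes and scales homogeneously in $\rho$:
\[
  \operatorname{vol}_g \;=\; (4\rho^2)^{3/2}\,\rho^{(4m-4)/2}\; dt\wedge\mu
  \;=\; 8\,e^{(4m+2)t}\; dt\wedge\mu ,
\]
where $\mu:=\alpha_1\wedge\alpha_2\wedge\alpha_3\wedge\pi^*\operatorname{vol}_{g_N}$ is a fixed, $t$-independent volume form on $P$ with total mass $V:=\int_P\mu=\operatorname{vol}(N,g_N)\in(0,\infty)$, and $\rho=e^{2t}$.

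Finally I would integrate over the two ends. On the neighborhood $\mc U_T^-$ of $\fre_-$,
\[
  \operatorname{vol}\bigl(\mc U_T^-,g\bigr)
  \;=\; 8V\int_{-\infty}^{-T} e^{(4m+2)t}\,dt
  \;=\; \frac{8V}{4m+2}\,e^{-(4m+2)T}\;<\;\infty ,
\]
so $\fre_-$ is an end of finite volume; whereas $\int_T^\infty e^{(4m+2)t}\,dt=\infty$ and every neighborhood of $\fre_+$ contains some $\mc U_{T'}^+$, so $\fre_+$ is an end of infinite volume. I do not expect a genuine obstacle here: the only steps requiring a little care are the $g_t$-orthogonality of the vertical and horizontal distributions (immediate from the block form of $g$) and the exponent bookkeeping $4m+2 = 2\bigl(3+(2m-2)\bigr)$ — the weight $\rho^3$ from the three vertical directions (each carrying $\rho^2$) and the weight $\rho^{2m-2}$ from the $4m-4$ horizontal directions (each carrying $\rho$), with the overall factor $2$ coming from $\rho=e^{2t}$.
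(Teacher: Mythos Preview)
Your argument is correct and follows essentially the same route as the paper: both compute the volume form of $g$ as a constant multiple of $\rho^{2n+3}\,dt\wedge\mathrm{vol}_{g_0}$ (your exponent $4m+2$ is the paper's $4n+6$ with $n=m-1$) and then integrate the resulting exponential over the two half-lines. Your version is slightly more detailed in justifying the two-ends topology and the block decomposition of the volume form, but there is no substantive difference in method.
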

\begin{proof}The (metric) volume form of $(M,g)$ is related to the volume form of 
$(P,g_0)$ by 
\[ \mathrm{vol}_g = 8 \rho^{2n+3}dt \wedge \mathrm{vol}_{g_0}.\]
Thus  for all $t_0\le 0\le t_1$ we have 
\[ f(t_0,t_1):=\mathrm{vol}(\{ (t,p) \in M \mid t_0<t<t_1\}) = C \int_{t_0}^{t_1}\rho (t)^{2n+3}dt, \quad C = 8 \int_P \mathrm{vol}_{g_0}.\] 
Inserting $\rho(t) = e^{2t}$ we obtain that 
\begin{eqnarray*} f(0,t) &=& \frac{C}{4n+6} (e^{(4n+6)t}-1) \rightarrow \infty\quad (t\rightarrow \infty)\\
f(-t,0) &=& \frac{C}{4n+6} (1-e^{-(4n+6)t}) \rightarrow \frac{C}{4n+6} \quad (t\rightarrow \infty). 
\end{eqnarray*} 
This proves the claim. 
\end{proof}
\begin{proof} (of Theorem~\ref{main:thm}) 
By  \cite{CRT} we can assume that $m\ge 3$. In fact, it suffices to assume $m\ge 2$.  
Let $(S,g_S)$ be any hyper-K\"ahler manifold as in Lemma~\ref{K3:lem}. 
The product $(N,g_N)=(S,g_S)^{\times (m-1)}$ is a 
hyper-K\"ahler manifold of dimension $4m-4$ with  integral K\"ahler forms. By Lemma~\ref{metric:lem} we can associate
a complete quaternionic K\"ahler manifold $(M,g)$ of dimension $4m$ and by Lemma~\ref{vol:lem} it has an end 
of finite volume. Now it suffices to show that $(M,g)$ is not locally symmetric. Since $(M,g)$ is a complete quaternionic K\"ahler manifold of negative scalar curvature, if it were locally symmetric, 
its universal covering would be a symmetric space of non-compact type and therefore contractible. 
It would follow that $\pi_2(M)=0$ and the homotopy sequence of the fibration $M \rightarrow N$ would then yield 
\[ \pi_2(M)=0 \rightarrow \pi_2(N) \rightarrow \pi_1(\mathbb{R}\times T^3) = \mathbb{Z}^3,\]
but there is no injective homomorphism of $\pi_2(N)$ into $\mathbb{Z}^3$, since $\pi_2(N)=\pi_2(S)^{m-1} = \mathbb{Z}^{22(m-1)}$. This proves that $(M,g)$ is not locally symmetric, establishing Theorem~\ref{main:thm}.  
Here we have used that $S$ is simply connected and therefore, by Hurewicz's theorem, 
$\pi_2(S) \cong H_2(S,\mathbb{Z})$. The latter group is torsion-free\footnote{In fact, the torsion 
of $H_2(S,\mathbb{Z})$ coincides with that of $H^3(S,\mathbb{Z})$ by the universal coefficient theorem
and the latter is trivial, see \cite[Ch. 1]{Hu}.} and its rank is well known to be 22, see \cite[Ch.\ 1]{Hu}.  
So $H_2(S,\mathbb{Z})\cong  \mathbb{Z}^{22}$. 
\end{proof}

\end{document}